\journal{Acta Astronautica}
\newtheorem{theorem}{Theorem}[section]
\newtheorem{lemma}[theorem]{Lemma}
\newtheorem{assumption}[theorem]{Assumption}
\newtheorem{proposition}[theorem]{Proposition}
\theoremstyle{remark}
\newtheorem{remark}[theorem]{Remark}
\theoremstyle{definition}
\newtheorem{definition}[theorem]{Definition}
\begin{document}

\begin{frontmatter}

\title{A New Shape-Based Multiple-Impulse Strategy for Coplanar Orbital Maneuvers}

\author{Amir Shakouri}
\address{Department of Aerospace Engineering, Sharif University of Technology, Tehran 14588-89694, Iran}
\author{Maryam Kiani\fnref{label01}}
\fntext[label01]{Corresponding Author, Email Addresses: \href{mailto:a_shakouri@ae.sharif.edu}{a$\_$shakouri@ae.sharif.edu} (A. Shakouri), \href{mailto:kiani@sharif.edu}{kiani@sharif.edu} (M. Kiani), \href{mailto:pourtak@sharif.edu}{pourtak@sharif.edu} (S.H. Pourtakdoust).}
\author{Seid H. Pourtakdoust}
\address{Center for Research and Development in Space Science and Technology, Sharif University of Technology, Tehran 14588-89694, Iran}

\begin{abstract}

A new shape-based geometric method (SBGM) is proposed for generation of multi-impulse transfer trajectories between arbitrary coplanar oblique orbits via a heuristic algorithm. The key advantage of the proposed SBGM includes a significant reduction in the number of design variables for an $N$-impulse orbital maneuver leading to a lower computational effort and energy requirement. The SBGM generates a smooth transfer trajectory by joining a number of confocal elliptic arcs such that the intersections share common tangent directions. It is proven that the well-known classic Hohmann transfer and its bi-elliptic counterpart between circular orbits are special cases of the proposed SBGM. The performance and efficiency of the proposed approach is evaluated via computer simulations whose results are compared with those of optimal Lambert maneuver and traditional methods. The results demonstrate a good compatibility and superiority of the proposed SBGM in terms of required energy effort and computational efficiency. 

\end{abstract}

\begin{keyword}
Orbital maneuver \sep Impulsive transfer \sep Shape-based geometric method \sep Coplanar orbits

\end{keyword}

\end{frontmatter}


\section{Introduction}
\label{S:1}

Multiple-impulse orbital maneuvers have been widely utilized in many space missions. The most well-known multiple impulsive maneuver for coplanar orbits was first proposed by Walter Hohmann \citep{01} in year 1925. Then, Lion et al. \citep{02} have utilized the primer vector for optimal impulsive time-fixed approaches. The primer vector has been implemented for both circular and elliptical orbits \citep{03,04}. In addition, the effect of path and thrust constraints on impulsive maneuvers have been investigated \citep{05,06}, and the optimal impulsive transfer in presence of time constraints is driven \citep{07}. Further, various heuristic optimization algorithms such as the genetic algorithm (GA) \citep{08}, the particle swarm optimization (PSO) \citep{09,09p}, and the simulated annealing \citep{09pp} have been proposed to design an optimal impulsive trajectory (also the reader can refer to \citep{09ppp} and references therein for more details). However, the Lambert’s approach has traditionally been utilized for conic trajectories between any two spatial points in space within a specified time interval. Due to vast applicability of the Lambert method \citep{10}, multiple-revolution \citep{11,12,13}, perturbed \citep{14,15} and optimized Lambert solutions \citep{16} have been subsequently developed for the multiple impulsive maneuvers. Relative Lambert solutions are also developed \citep{17} and utilized for impulsive rendezvous of spacecraft \citep{18}. Moreover, the effect of orbital perturbations including the Earth oblateness \citep{19,20} and the third body \citep{21} on the impulsive trajectories has been investigated to make more efficient transfers. The impulsive orbital transfers have also been addressed against the low-thrust trajectories \citep{22}, and it is shown that the impulsive transfer can be viewed as the limit of the low-thrust transfer when its number of revolutions increases. 

Impulse vectors can be applied perpendicular to the orbit plane for plane change maneuvers where in \citep{22p}, analytic fuel optimal solutions are provided. Impulsive maneuvers sometimes are applied tangentially at the origin and/or destination points. Altman et al. \citep{23} have proposed the hodograph theory to generate tangent maneuvers, which can improve the rendezvous safety \citep{24}. Zhang et al. \citep{25} have analytically analyzed the existence of tangent solutions between elliptical orbits, followed by further study on utility of two-impulse tangent orbital maneuvers \citep{26,27}. In addition, the reachable domain of spacecraft with just a single tangent impulse is studied in \citep{28,29}. 

Shape-based approaches for orbital maneuvers are based on the trajectory geometry. In continuous propulsion systems, some candidate spiral functions can be considered to define the spacecraft trajectory \citep{30,31,32}. Utilizing these functions makes the resultant optimization problem have fewer parameters in comparison to the main problem. The parameters are selected such that satisfy the problem constraints. An efficient function for the trajectory shaping is obviously a smooth curve that passes through the initial and final positions and also avoids large control efforts. 

The current paper presents a novel shape-based geometric method (SBGM) to form a smooth multiple-impulse coplanar transfer trajectory between any two arbitrary spatial points in space. The smoothness property is guaranteed via solving a system of nonlinear equations. Smoothness constraint reduces the number of required design variables for the resultant optimization problem indeed, and lead to a near optimal solution in terms of the energy requirement. Unlike the continuous propulsion case, the proposed approach in the current work provides a final smooth trajectory via joining a number of piecewise elliptic arcs, while observing the continuity requirements. The proposed SBGM reduces the number of design variables and enhances the ability of seeking for global optimum in a bounded region. The applicability and efficiency of the proposed SBGM are verified through comparison with the traditional methods such as the optimal Hohmann \citep{33,34} and Lambert solutions as well as the single impulse maneuver. In this sense, the main contributions of the current work can be summarized as follows: 
\begin{enumerate}[1-]
\item The SBGM can construct a smooth trajectory between any pair of coplanar oblique-elliptic orbits with any number of impulses; 
\item The SBGM reduces the number of required design variables in comparison with the existing methods such that it is computationally more efficient; 
\item The optimal solution of SBGM results in significantly lower control effort in comparison to the methods with similar number of design variables. 
\end{enumerate} 

The remaining parts of this paper are organized as follows: Basic geometric formulation as well as the shape-based impulsive maneuver algorithm are introduced in Section \ref{S:2}. Subsequently, Section \ref{S:3} is devoted to analyze circle-to-circle maneuvers for two- and three-impulse maneuvers. Applications of the proposed SBGM to the other types of orbital maneuvers are reported in Section \ref{S:4}. An extension of the SBGM for small variations is formulated in Section \ref{S:4p}. Finally, concluding remarks are presented in Section \ref{S:5}.

\section{Geometric Formulation and Requirements}
\label{S:2}

The purpose of this section is to develop the mathematical formulation needed to form an overall smooth trajectory constructed by joining a number of co-focal elliptical arcs. Since the satellite orbits are assumed in a two-body context, these elliptic arcs construct the satellite transfer trajectory after and before each impulse. 

\begin{definition}
A trajectory curve is called smooth, if it has $G^0$ and $G^1$ continuities, i.e., the curves intersect and share identical tangent direction at the intersection points \citep{35}.
\end{definition}

\begin{remark}
A smooth trajectory does not necessarily share a common center of curvature at the intersection points ($G^2$ continuity), unless the eccentricity of the connecting arcs is the same.
\end{remark}

The following assumptions are further stipulated for the intended trajectories: 

\begin{assumption}
\label{ass1}
The trajectory segments are elliptic, with the Earth center as a focus. 
\end{assumption}
\begin{assumption}
\label{ass2}
The trajectory segments are smooth at the intersection points. 
\end{assumption}

Considering an Earth centered inertial (ECI) reference coordinate system with the $x$ and $y$ axes as defined in Fig. \ref{fig:1}, an elliptic arc can be defined in depicted polar coordinate system as: 
\begin{equation}
\label{eq:1}
r=\frac{a(1-e^2)}{1+e\cos(\theta+\omega)}
\end{equation}
where $r$ is the local radius, $a$ denotes the semi-major axis, and $e$ represents the orbital eccentricity. As shown in Fig. \ref{fig:1}, the angle $\omega$ defines the rotation of the pre-apsis with respect to $x$, and $\theta+\omega$ represents  the orbital true anomaly. In this sense, a set of orbital elements $\boldsymbol p =[a\quad e\quad \theta\quad \omega]^T$ can be introduced as a complete description of the spacecraft position on a planar trajectory. In turn, orbital elements can be constructed just with the use of the radius vector $\boldsymbol r$, and its corresponding velocity vector $\boldsymbol v$ in the reference ECI coordinate system as follows \citep{36}: 
\begin{equation}
\label{eq:2}
e=\left\|\frac{rv^2}{\mu}\left[\widehat{\boldsymbol r}-(\widehat{\boldsymbol r}^T\widehat{\boldsymbol v})\widehat{\boldsymbol v}\right]-\widehat{\boldsymbol r}\right\|
\end{equation}
\begin{equation}
\label{eq:3}
a=\frac{r^2v^2}{\mu(1-e^2)}\|\widehat{\boldsymbol r}\times \widehat{\boldsymbol v}\|^2
\end{equation}
\begin{equation}
\label{eq:4}
\theta=\tan^{-1}\left(\frac{\widehat{r}_y}{\widehat{r}_x}\right)
\end{equation}
\begin{equation}
\label{eq:5}
\omega=\cos^{-1}\left\{\frac{1}{e}\left[\frac{a}{r}(1-e^2)-1\right]\right\}-\theta
\end{equation}
where $\|\cdot\|$ stands for the Euclidean norm, $r=\|\boldsymbol r\|$, $v=\|\boldsymbol v\|$, $\widehat{\boldsymbol r}=\boldsymbol r/r=\left[\widehat{r}_x\quad\widehat{r}_y\quad\widehat{r}_z\right]^T$, and $\widehat{\boldsymbol v}=\boldsymbol v/v$, respectively. The slope of a tangent line to the polar curve given in  Eq. \eqref{eq:1} is

\begin{equation}
\label{eq:6}
\frac{dr}{d\theta}=\frac{e\sin(\theta+\omega)}{1+e\cos(\theta+\omega)}r
\end{equation}

\begin{figure}[H]
\centering\includegraphics[width=0.5\linewidth]{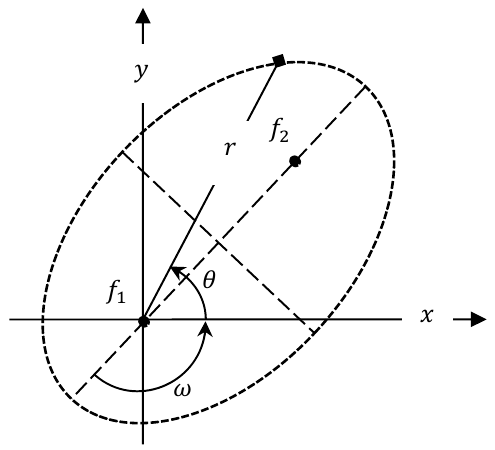}
\caption{Parameters of an oblique ellipse in polar coordinate system.}
\label{fig:1}
\end{figure}

In general we assume that a complete  orbital maneuver uses $N$ impulses to transfer from a point on initial orbit to a final destination point on the final orbit. The initial orbit is indexed by $i=1$ and the final orbit is denoted by $i=N+1$. Consequently, the sequential arcs are indexed from $i=2$ to $N$. In addition, it is assumed that the initial and the final orbital elements, i.e., $a_1$, $e_1$, $\omega_1$, and $\theta_1$ as well as $a_{N+1}$, $e_{N+1}$, $\omega_{N+1}$, and $\theta_{N+1}$ are known beforehand. 

Obviously, for an $N$-impulse maneuver, $N-1$ arcs need to be designed. At the intersection of any two sequential arcs, such as  $i$ and $i+1$,the following equations need to be held to satisfy Assumptions \ref{ass1} and \ref{ass2}:
\begin{equation}
\label{eq:7}
r_i=r_{i+1}\equiv r_{i(i+1)}
\end{equation}
\begin{equation}
\label{eq:8}
\theta_i=\theta_{i+1}\equiv \theta_{i(i+1)}
\end{equation}
\begin{equation}
\label{eq:9}
\left(\frac{dr}{d\theta}\right)_i=\left(\frac{dr}{d\theta}\right)_{i+1}
\end{equation}

Utilizing Eqs. \eqref{eq:7} to \eqref{eq:9}, the following relations should be satisfied  for each intersection $i$: 
\begin{equation}
\label{eq:10}
\begin{split}
f_{i1}=e_i\sin(\theta_{i(i+1)}+\omega_i)+e_ie_{i+1}\sin(\omega_i-\omega_{i+1})- \\
e_{i+1}\sin(\theta_{i(i+1)}+\omega_{i+1})=0
\end{split}
\end{equation}

\begin{equation}
\label{eq:11}
\begin{split}
f_{i2}=a_i(1-e_i^2)\left[1+e_{i+1}\cos(\theta_{i(i+1)}+\omega_{i+1})\right]- \\
a_{i+1}(1-e_{i+1}^2)\left[1+e_i\cos(\theta_{i(i+1)}+\omega_i)\right]=0
\end{split}
\end{equation}

As there are $N$ intersections,  consequently $2N$ equations will be developed, $\boldsymbol f=[f_{11}\quad\cdots
 \quad f_{N2}]^T$. While, the  unknown parameters are $a_i$, $e_i$, and $\omega_i$ for $i=2,...,N$, plus $\theta_{i(i+1)}$ for $i=2,...,N-1$, there would be $4N-5$ unknowns to be determined. Since, the number of unknowns are more than the number of equations for $N\geq3$, some additional feasible assumptions on adjustable parameters, $\boldsymbol\epsilon\subset\{\boldsymbol p_2,\cdots,\boldsymbol p_{N-1}\}$ are required. Assume that $M=2N-5$ parameters are adjustable, then the number of unknowns and the equations will be equal that makes a feasible solution. 

\begin{proposition}
\label{prop1}
Suppose Assumptions \ref{ass1} and \ref{ass2} hold. Then, for an $N$-impulse orbital maneuver, the $i$th orbit, $i=2,...,N$, cannot be circular unless the impulse is exerted at the apsis point (apogee or perigee) of the previous and the following orbits. 
\end{proposition}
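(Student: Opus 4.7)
The plan is to exploit the $G^1$ continuity equations \eqref{eq:10} written at the two intersections that bound a putatively circular arc. Suppose orbit $i$, with $2\le i\le N$, is circular, so that $e_i=0$. This arc is bounded at one end by the impulse between orbits $i-1$ and $i$ (governed by $f_{(i-1)1}=0$) and at the other end by the impulse between orbits $i$ and $i+1$ (governed by $f_{i1}=0$). I would simply substitute $e_i=0$ into each of these two tangent-matching equations and read off where the two impulse points are forced to lie.

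Substituting $e_i=0$ into the first equation annihilates every term carrying a factor of $e_i$, leaving
\begin{equation*}
e_{i-1}\sin(\theta_{(i-1)i}+\omega_{i-1})=0,
\end{equation*}
and substituting into the second similarly leaves
\begin{equation*}
-e_{i+1}\sin(\theta_{i(i+1)}+\omega_{i+1})=0.
\end{equation*}
Provided the neighboring orbits are not themselves circular, i.e.\ $e_{i-1}\neq 0$ and $e_{i+1}\neq 0$, each sine factor must vanish, forcing $\theta+\omega=0$ or $\pi$ at each impulse point. By the polar form \eqref{eq:1}, these are precisely the perigee and apogee of the corresponding neighboring orbit, which is the stated claim. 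Note that the radius-matching equation \eqref{eq:11} then automatically yields the consistent relation $a_i=a_{i\pm1}(1\pm e_{i\pm1})$, confirming that the circular radius coincides with the apsidal radius of the adjacent orbit.

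The one subtle step is the degenerate possibility that a neighboring orbit is also circular, say $e_{i-1}=0$. In that case the tangent equation is satisfied identically and yields no information, so I would fall back on \eqref{eq:11}: with both $e_i$ and $e_{i-1}$ zero it collapses to $a_{i-1}=a_i$, meaning the two circular arcs lie on the same circle and the impulse between them is null. This contradicts the premise that the maneuver delivers $N$ genuine impulses, ruling out the degenerate case; an analogous collapse disposes of $e_{i+1}=0$. I expect this corner case to be the only real obstacle, since the main substitution argument is essentially a one-line reading of \eqref{eq:10}.
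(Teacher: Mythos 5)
Your proof is correct and follows essentially the same route as the paper's: substitute $e_i=0$ into the two tangency equations \eqref{eq:10} bounding the circular arc, conclude the sine factors must vanish (placing the impulses at the apsides of the neighboring orbits), and dispose of the degenerate circular-neighbor case via \eqref{eq:11}, which forces equal semi-major axes and hence a null impulse. The only difference is cosmetic: the paper works with the representative case $i=2$ and then extends to general $i$, whereas you argue directly at a general index, and you retain the sign in $-e_{i+1}\sin(\theta_{i(i+1)}+\omega_{i+1})=0$ which the paper drops.
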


\begin{proof}
Suppose that $e_2=0$. Therefore, from Eq. \eqref{eq:10}: 
\begin{equation}
\label{eq:12}
f_{11}=e_1\sin(\theta_{12}+\omega_1)=0
\end{equation}
\begin{equation}
\label{eq:13}
f_{21}=e_3\sin(\theta_{23}+\omega_3)=0
\end{equation}
$f_{11}=0$ is satisfied if $\theta_{12}+\omega_1=0$, $\theta_{12}+\omega_1=\pi$, or $e_1=0$. If $e_1=0$, Eq. \eqref{eq:11} leads to $a_1=a_2$ and consequently no impulses need to be applied, that is not acceptable. In addition, $\theta_{12}+\omega_1=0$ or $\pi$, means that the impulse should be exerted at the perigee or the apogee of the first orbit, respectively. Moreover, $f_{21}=0$ is satisfied if $\theta_{23}+\omega_3=0$, $\theta_{23}+\omega_3=\pi$, or $e_3=0$. If $e_3=0$, Eq. \eqref{eq:11} leads to $a_3=a_2$ that is similarly not acceptable. Finally, $\theta_{23}+\omega_3=0$ or $\pi$, means that the impulse should be exerted at the perigee or apogee of the third orbit, respectively. This principle can be extended to other adjacent orbits, i.e., $i-1$, $i$, and $i+1$ as well.
\end{proof}

\begin{remark}
In the numerical computations of the SBGM algorithm, singularities may occur. If the $i$th orbit, $i=2,...,N$, is circular, the determinant of the Jacobian matrix of $\boldsymbol f$, required for numerical solution utilizing classical methods like Newton, becomes zero and then singularity would arise. In this sense, the initial guess should not contain zero eccentricities in a Newton-based solution. However, using other computational methods like GA can rescue the solution process from singularities.
\end{remark}

\begin{remark}
\label{rem:1}
It is convenient for impulsive maneuvers to apply the first impulse at the perigee since the semi-major axis is more sensitive to the perigee velocity. In this case, for a two-impulse maneuver, without loss of generality one can assume $\theta_{12}+\omega_1=0$ and $\omega_1=\omega_2=0$. Consequently, according to Eq. \eqref{eq:10}, $f_{11}=0$ can be eliminated from the system of equations. In addition, Eq. \eqref{eq:11} for $i=1$ reduces to $a_2=a_1(1-e_1)/(1-e_2)$ that can be substituted in Eq. \eqref{eq:11} for $i=2$ in order to eliminate $a_2$ and finally arrive at the following equation: 
\begin{equation}
\label{eq:13p}
\frac{a_1}{a_3}\frac{1-e_1}{1-e_3}(1+e_2)[1+e_3\cos(\theta_{23}+\omega_3)]-(1+e_3)[1+e_2\cos(\theta_{23})]=0
\end{equation}
Eq. \eqref{eq:10} for $i=2$ reduces to:
\begin{equation}
\label{eq:13pp}
e_2\sin(\theta_{23})-e_2e_3\sin(\omega_3)-e_3\sin(\theta_{23}+\omega_3)=0
\end{equation}
In this sense, Eqs. \eqref{eq:13p} and \eqref{eq:13pp} construct a system of two equations which can be used to solve two scalar unknowns $e_2$ and $\theta_{23}$. If however, the final orbit ($i=3$) is circular (ellipse-to-circle maneuver), $e_3=0$, such that $a_3\geq a_1$, the solutions of Eqs. \eqref{eq:13p} and \eqref{eq:13pp} reduce to: 
\begin{equation}
\label{eq:13ppp}
\theta_{23}=\pi, \quad\quad e_2=\frac{a_3-a_1(1-e_1)}{a_3+a_1(1-e_1)}
\end{equation}
\end{remark}

The interesting feature of the proposed geometric method is reducing the number of variables need to be adjusted. Table \ref{table:1} shows the design (or optimization) variables required to construct an $N$-impulse orbital maneuver in the conventional methods and compares them against the proposed SBGM. 

\begin{table}[H]
\caption{The number of design variables for $N$-impulse coplanar maneuvers}
\label{table:1}
\centering
\begin{tabular}{p{2.6 cm} p{2.5 cm} p{1.7 cm} p{2.5 cm} p{1.3 cm}} 
 \hline\hline
 Initial Impulse Location & Final Impulse Location & Maneuver Time & No. of Design Variables & Related Ref. \\ 
 \hline
 F  & F  & F  & $3N-6$ & \citep{05} \\ 
 F  & F  & NF & $3N-5$ & \citep{07} \\
 F  & NF & F  & $3N-4$ & \citep{02} \\
 F  & NF & NF & $3N-3$ & \citep{07} \\
 NF & NF & F  & $3N-2$ & \citep{02} \\
 NF & NF & NF & $3N-1$ & \citep{04} \\
 \hline
 \multicolumn{3}{c}{Proposed SBGM} & $2N-5$ & [$*$] \\
 \hline\hline
 \multicolumn{5}{l}{\scriptsize{No.: Number, Ref.: Reference, F: Fixed, NF: Not Fixed, [$*$]: Current paper}} \\
\end{tabular}
\end{table}

Table \ref{table:1} shows that utilizing lower number of design variables (that is achieved by the use of smoothness property) for the orbital transfer is one of the key advantages of the proposed SBGM. The proposed algorithm needs two fixed initial and final conditions for initiation, but since the initial and final orbits are also candidate trajectories in the optimization process, SBGM can be regarded as a general orbit-to-orbit maneuver algorithm for problems in which the initial and the final impulse locations are not necessarily fixed. The SBGM algorithm is summarized in Algorithm \ref{al1}.

\begin{algorithm}[H]
\label{al1}
 \KwData{Initial orbit elements $\boldsymbol p_1$, final orbit elements $\boldsymbol p_N$, number of impulses $N$, and adjustable variables $\boldsymbol \epsilon$.}
 \KwResult{All orbital elements $\boldsymbol p_i$, $i=1,...,N$. }

 \For{$i=1$ to $N$}{
  \nl Make the system of nonlinear equations from Eqs. \eqref{eq:10} and \eqref{eq:11}: $\boldsymbol f_{2i-1:2i}\leftarrow [f_1(\boldsymbol p_1,\boldsymbol p_N,\boldsymbol \epsilon)\quad f_2(\boldsymbol p_1,\boldsymbol p_N,\boldsymbol \epsilon)]^T$
 }
 \nl Use an algorithm such as the relaxed Newton method \citep{37} to solve the system of equations made in previous step: $(\boldsymbol p_2,...,\boldsymbol p_{N-1})\leftarrow\text{Solution}\{\boldsymbol f(\boldsymbol p_2,\boldsymbol p_{N-1},\boldsymbol \epsilon)=[0]_{2N\times 1}\}$ 
 
 \Return{$\boldsymbol p_i$, $i=1,...,N$}
 \caption{SBGM Algorithm for tangent trajectory generation}
\end{algorithm}

Upon the returning of unknowns and  $\boldsymbol p_i$, $i=1,...,N$, the impulse vectors can be calculated. In this respect, the velocity before and after applying the $i$th impulse is denoted by $v_i^-$ and $v_i^+$, respectively. Therefore, the impulse magnitude can be calculated as:
\begin{equation}
\label{eq:14}
\Delta v_i=v_i^+-v_i^-
\end{equation}
where
$$v_i^-=\sqrt{\mu\left(\frac{2}{r_i}-\frac{1}{a_i}\right)} \quad\quad v_i^+=\sqrt{\mu\left(\frac{2}{r_i}-\frac{1}{a_{i+1}}\right)}$$
and $a_i$ is the semi-major axis of the $i$th orbit. 

\section{Maneuver between Circular Orbits}
\label{S:3}

In this section two propositions are proven in order to find the links between the proposed SBGM and the well-known Hohmann transfer as well as its three-impulse counterpart, i. e., the bi-elliptic maneuver. First, consider the following lemma that is another interpretation of Proposition \ref{prop1} indeed.  

\begin{lemma}
\label{lem1}
According to the Assumptions \ref{ass1} and \ref{ass2}, if the previous (or the following) arc is circular, $e_{i-1}=0$ (or $e_{i+1}=0$), any required impulse should be exerted at an apsis point of the $i$th orbit.  
\end{lemma}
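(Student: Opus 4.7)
The statement is essentially the contrapositive/dual reading of Proposition \ref{prop1}, so my plan is to recycle the algebraic substitution already used there, but applied to the tangency equation at a different intersection. Concretely, I would fix attention on the intersection where the circular neighbor meets the $i$th arc and evaluate Eq. \eqref{eq:10} there.

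First I would handle the case $e_{i+1}=0$. Specializing Eq. \eqref{eq:10} for intersection index $i$ with $e_{i+1}=0$ collapses two of its three terms, leaving
\begin{equation*}
f_{i1}=e_i\sin(\theta_{i(i+1)}+\omega_i)=0.
\end{equation*}
Since $\theta+\omega$ is the true anomaly of the orbit (as noted after Eq. \eqref{eq:1}), the vanishing of this sine forces $\theta_{i(i+1)}+\omega_i\in\{0,\pi\}$, which is precisely the condition that the impulse point is an apsis (perigee or apogee) of the $i$th orbit. For the dual case $e_{i-1}=0$, I would apply Eq. \eqref{eq:10} at intersection index $i-1$, i.e. to $f_{(i-1)1}$: the two terms carrying the factor $e_{i-1}$ drop out and I am left with $-e_i\sin(\theta_{(i-1)i}+\omega_i)=0$, giving the same apsis conclusion for orbit $i$.

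The one thing that needs explicit justification — and where I expect the only real obstacle to lie — is ruling out the degenerate branch $e_i=0$, because then the factored equation $e_i\sin(\cdot)=0$ is satisfied trivially and the apsis conclusion would be vacuous. Here I would reuse the argument in the proof of Proposition \ref{prop1}: if $e_i=0$ together with $e_{i\pm 1}=0$, then substituting into Eq. \eqref{eq:11} at the same intersection forces $a_i=a_{i\pm 1}$, i.e.\ the two ``consecutive'' orbits coincide, so no impulse is actually applied there. Hence whenever a nontrivial impulse is present we must have $e_i\neq 0$, and the $\sin(\theta_{i(i+1)}+\omega_i)=0$ branch is the operative one.

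Finally, I would close by noting that the argument is purely local at a single intersection and makes no use of the other arcs, so it applies verbatim to every impulse bordering a circular segment. This also clarifies why Lemma \ref{lem1} is a genuine rephrasing of Proposition \ref{prop1}: Proposition \ref{prop1} views circularity of the middle arc as forcing apsis conditions on the two neighbors, while Lemma \ref{lem1} views circularity of a neighbor as forcing an apsis condition on the middle arc — both follow from the same one-line specialization of the tangency equation \eqref{eq:10}.
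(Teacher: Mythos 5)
Your proof is correct and follows essentially the same route as the paper's: both specialize Eq.~\eqref{eq:10} at the intersection bordering the circular neighbor to obtain $e_i\sin(\theta+\omega_i)=0$, split into the two apsis branches and the degenerate branch $e_i=0$, and dispose of the latter via Eq.~\eqref{eq:11} (the adjacent orbits coincide, so no impulse is applied). The only difference is cosmetic: the paper additionally records the resulting semi-major-axis relations, Eqs.~\eqref{eq:16}--\eqref{eq:21}, which serve the later propositions but are not needed for the lemma itself.
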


\begin{proof}
Case 1: The previous orbit is circular, $e_{i-1}=0$. Thus, the corresponding argument of perigee can be eliminated, $\omega_{i-1}=0$. In this regard, according to Eq. \eqref{eq:10}:

\begin{equation}
\label{eq:15}
e_i\sin(\theta_{(i-1)i}+\omega_i)=0
\end{equation}
Therefore, three situations can be considered:
\begin{enumerate}[(\text{1-}1)]
\item $\theta_{(i-1)i}+\omega_i=0$;
\item $\theta_{(i-1)i}+\omega_i=\pi$;
\item $e_i=0$.
\end{enumerate}

According to Eq. \eqref{eq:11}, the conditions (1-1) to (1-3) lead to the following equations:
\begin{equation}
\label{eq:16}
a_i=a_{i-1}\frac{1}{1-e_i}
\end{equation}
\begin{equation}
\label{eq:17}
a_i=a_{i-1}\frac{1}{1+e_i}
\end{equation}
\begin{equation}
\label{eq:18}
a_i=a_{i-1}
\end{equation}

Case 2: The following orbit is circular, $e_{i+1}=0$. Similarly, three conditions can be considered in this case: 

\begin{enumerate}[(\text{2-}1)]
\item $\theta_{i(i+1)}+\omega_i=0$;
\item $\theta_{i(i+1)}+\omega_i=\pi$;
\item $e_i=0$.
\end{enumerate}

According to Eq. \eqref{eq:11}, the conditions (2-1) to (2-3) lead to the following equations:
\begin{equation}
\label{eq:19}
a_i=a_{i+1}\frac{1}{1-e_i}
\end{equation}
\begin{equation}
\label{eq:20}
a_i=a_{i+1}\frac{1}{1+e_i}
\end{equation}
\begin{equation}
\label{eq:21}
a_i=a_{i+1}
\end{equation}

Conditions (1-1), (1-2), (2-1), and (2-2) (Eqs. \eqref{eq:16}, \eqref{eq:17}, \eqref{eq:19}, and \eqref{eq:20}) describe impulse positions corresponding to the perigee and apogee, respectively. Conditions (1-3) and (2-3) mean that no impulses should be applied and consequently the $N$-impulse maneuver reduces to an $(N-1)$-impulse maneuver.
\end{proof}

Eccentricity of the $i$th orbit, depends on the following subsequent trajectory. Solution of Eqs. \eqref{eq:16}, \eqref{eq:17}, \eqref{eq:19}, or \eqref{eq:20} requires $e_i$ to be known.

\begin{proposition}
\label{prop2}
Under Assumptions \ref{ass1} and \ref{ass2}, for a two-impulse maneuver between circular orbits, the only solution is the well-known Hohmann maneuver. 
\end{proposition}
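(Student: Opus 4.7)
The plan is to apply Lemma \ref{lem1} at both intersection points of the single transfer arc, and then show that the only geometry compatible with both endpoints being apsides is the classical Hohmann ellipse.

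First I would set up notation: for a two-impulse maneuver ($N=2$) there is exactly one transfer arc, indexed $i=2$, between the initial circular orbit ($i=1$, $e_1=0$) and the final circular orbit ($i=3$, $e_3=0$). By Lemma \ref{lem1} applied at the first intersection (with previous orbit $i=1$ circular), the impulse must be exerted at an apsis of the transfer orbit, so $\theta_{12}+\omega_2\in\{0,\pi\}$ (the degenerate case $e_2=0$ would force $a_2=a_1=a_3$ by Eqs. \eqref{eq:18} and \eqref{eq:21}, contradicting the need for a nontrivial maneuver between distinct orbits). By the second half of Lemma \ref{lem1} applied at the second intersection (with following orbit $i=3$ circular), the same must hold: $\theta_{23}+\omega_2\in\{0,\pi\}$.

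Next I would rule out the coincidence $\theta_{12}+\omega_2=\theta_{23}+\omega_2$, since that would place both impulses at the same point of the transfer ellipse and hence no arc is traversed. Therefore one endpoint is the perigee of the transfer orbit and the other is the apogee. Without loss of generality, assume $\theta_{12}+\omega_2=0$ (perigee) and $\theta_{23}+\omega_2=\pi$ (apogee). Substituting into Eq.~\eqref{eq:11} at $i=1$ gives $r_{12}=a_2(1-e_2)=a_1$, and at $i=2$ gives $r_{23}=a_2(1+e_2)=a_3$ (these are exactly the specializations \eqref{eq:16} and \eqref{eq:20} from Lemma \ref{lem1}).

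Solving this linear pair yields
\begin{equation*}
a_2=\frac{a_1+a_3}{2},\qquad e_2=\frac{a_3-a_1}{a_3+a_1},
\end{equation*}
which is precisely the Hohmann transfer ellipse: an orbit tangent internally to the inner circle at its perigee and tangent internally to the outer circle at its apogee. The symmetric assignment (apogee at $i=1$, perigee at $i=2$) simply corresponds to the descending Hohmann transfer from the outer to the inner orbit, giving the same ellipse. Since every step above was forced by Assumptions \ref{ass1}, \ref{ass2} and Lemma \ref{lem1}, this is the unique solution, completing the proof. The only mildly delicate point I foresee is cleanly discarding the coincident-apsis case and the degenerate $e_2=0$ subcase; everything else reduces to the two tangency conditions above.
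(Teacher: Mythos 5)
Your proof is correct and follows essentially the same route as the paper: apply Lemma \ref{lem1} at both intersection points, discard the degenerate ($e_2=0$) and coincident-apsis cases, and combine the two tangency conditions (Eqs. \eqref{eq:16} and \eqref{eq:20}) to obtain the Hohmann ellipse with $e_2=(a_3-a_1)/(a_3+a_1)$. The only noteworthy difference is cosmetic and slightly in your favor: where the paper dismisses the apogee-first/perigee-second assignment as giving a ``negative eccentricity'' (implicitly assuming $a_3>a_1$), you identify it symmetrically as the descending Hohmann transfer, and you also record $a_2=(a_1+a_3)/2$ explicitly.
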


\begin{proof}
For a two-impulse maneuver between orbits $i=1$ and $i=3$, a single orbit, $i=2$, is required to allow the transfer mission. Thus, from Lemma \ref{lem1}, conditions (1-1) or (1-2) should be observed beside the conditions (2-1) or (2-2). Conditions (1-1) and (2-1) or similarly conditions (1-2) and (2-2) are inconsistent and lead to $a_1=a_3$. Conditions (1-1) and (2-2) yields
\begin{equation}
\label{eq:22}
e_2=\frac{a_3-a_1}{a_3+a_1}
\end{equation}
that guarantees $0<e_2<1$. Condition (1-2) beside (2-1) yield a negative eccentricity that is invalid. Considering the Hohmann transfer orbit characteristics, Eq. \eqref{eq:22} thus introduces the common Hohmann \citep{01} solution. 
\end{proof}

\begin{proposition}
\label{prop3}
Considering Assumptions \ref{ass1} and \ref{ass2} for a three-impulse maneuver between circular orbits, if the location of two impulses lie on a straight line passing through the focus, the solution is the bi-elliptic Hohmann maneuver.
\end{proposition}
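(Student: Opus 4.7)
The plan is to combine Lemma~\ref{lem1}, which pins down the outer two impulses at apsides of the intermediate transfer orbits, with the collinearity hypothesis and the tangency continuity at the middle impulse. Since orbit~1 and orbit~4 are circular, Lemma~\ref{lem1} applied at the first and third impulses gives $\theta_{12}+\omega_2\in\{0,\pi\}$ and $\theta_{34}+\omega_3\in\{0,\pi\}$, so impulses~1 and~3 lie on the major axes of orbits~2 and~3, respectively. Using the rotational invariance of the ECI frame, I would align the $x$-axis with the focal line that by hypothesis contains two of the three impulses, and then split into cases according to which pair is collinear with the focus.

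In case~(a), impulses~1 and~2 lie on this focal line. Because impulse~1 is an apsis of orbit~2, the line is the major axis of orbit~2; since a focal chord meets an ellipse at exactly two points, impulse~2 must sit at the opposite apsis of orbit~2. The tangent to orbit~2 there is perpendicular to the radius, and the smoothness condition Eq.~\eqref{eq:9} forces orbit~3 to share that tangent direction. On a conic a radial-perpendicular tangent occurs only at an apsis, so impulse~2 is simultaneously an apsis of orbit~3. Consequently orbits~2 and~3 share a common major axis, and impulse~3 (being at an apsis of orbit~3) lies on this same line. All three impulses are therefore collinear with the focus, the two transfer ellipses meet at a common apsis, and the trajectory is precisely the bi-elliptic Hohmann transfer. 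Case~(b), with impulses~2 and~3 collinear, is symmetric to case~(a) under relabeling.

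Case~(c), in which the collinearity is imposed on impulses~1 and~3 but not a priori on the middle impulse, is the delicate step and the main obstacle, because impulse~2 has to be driven onto the focal line by the continuity equation rather than by a direct intersection argument. The hypothesis gives $\omega_2,\omega_3\in\{0,\pi\}$, which I would substitute into the tangency equation~\eqref{eq:10} for $i=2$. A short calculation collapses it to $(e_2-e_3)\sin\theta_{23}=0$ when $\omega_2=\omega_3$ and to $(e_2+e_3)\sin\theta_{23}=0$ when $\omega_2\neq\omega_3$. The second alternative immediately yields $\sin\theta_{23}=0$, placing impulse~2 on the $x$-axis. The first yields the same conclusion unless $e_2=e_3$, in which case Eq.~\eqref{eq:11} at $i=2$ reduces to $a_2=a_3$, so orbits~2 and~3 coincide, the middle impulse vanishes, and the maneuver degenerates to the two-impulse Hohmann covered by Proposition~\ref{prop2}. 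Excluding that degeneracy, impulse~2 lies on the $x$-axis together with impulses~1 and~3, and the same apsis-sharing argument as in case~(a) identifies the configuration with the bi-elliptic Hohmann maneuver, completing the proof.
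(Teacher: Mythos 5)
Your proof is correct, and its backbone is the same as the paper's: Lemma~\ref{lem1} pins the first impulse at an apsis of orbit~2 and the third impulse at an apsis of orbit~3, and the collinearity hypothesis then forces the whole configuration onto a single apse line, i.e., the bi-elliptic transfer. The difference is one of completeness rather than strategy. The paper's proof is a three-sentence argument that never distinguishes \emph{which} two impulses are hypothesized to be collinear and never invokes the tangency condition at the middle impulse; it simply asserts that the impulses must occur at true anomalies $0$ or $\pi$. Your case split turns that assertion into an actual argument: cases~(a) and~(b) use the fact that a focal line meets an ellipse in exactly its two apsides together with the radial-perpendicular-tangent characterization of an apsis, and case~(c) --- the configuration the paper silently skips --- extracts collinearity of the middle impulse from Eq.~\eqref{eq:10}, correctly isolating the degenerate sub-case $e_2=e_3$, $\omega_2=\omega_3$, where Eq.~\eqref{eq:11} forces $a_2=a_3$ and the maneuver collapses to the Hohmann transfer of Proposition~\ref{prop2}. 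One small point worth adding: in case~(a), the step ``a radial-perpendicular tangent occurs only at an apsis'' requires $e_3>0$; the excluded possibility $e_3=0$ is degenerate for the same reason as in your case~(c), since Eq.~\eqref{eq:11} at $i=3$ with $e_3=e_4=0$ gives $a_3=a_4$ and the third impulse vanishes. This omission does not damage the proof, but closing it would make your treatment uniformly airtight, and as it stands your version supplies rigor that the paper's own proof leaves implicit.
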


\begin{proof}
According to Lemma \ref{lem1}, the first and the third impulses should be applied at the perigee (or the apogee) of the second and the third orbits, respectively. Therefore, since the perigee and the apogee of an ellipse lie on a straight line passing through the focus, the only solution that satisfies both Assumptions \ref{ass1} and \ref{ass2} will require the impulses to be applied only at two true anomalies of either $0$ or $\pi$. In this sense, the impulses should be exerted on the perigee or apogee of the second and the third orbits, and consequently a bi-elliptic maneuver \citep{36} develops. 
\end{proof}

\begin{remark}
For a general three-impulse maneuver between circular orbits, the impulse locations should not necessarily lie on the same apse line to satisfy Assumptions \ref{ass1} and \ref{ass2}.
\end{remark}
Consequently, the well-known Hohmann transfer and the bi-elliptic method are special cases of the general multiple impulse trajectory problem proposed here. 
 
\section{Optimal Solutions}
\label{S:4}
The proposed geometric method, SBGM can be improved via an optimization process. A control effort (CE) cost function is taken to minimize the sum of impulse norms: 
\begin{equation}
\label{eq:23}
J_c=\sum_{j=1}^{N}|\Delta v_j|
\end{equation}
In addition, the maximum impulse (MI) cost function is taken to minimize the maximum impulse norm:
\begin{equation}
\label{eq:24}
J_m=\max_{j=1,...,N}\left(|\Delta v_j|\right)
\end{equation}
Two case studies are analyzed here for which the above mentioned cost functions are utilized ($J_c$ and $J_m$): The first case investigates an orbital maneuver to circularize a low Earth orbit (LEO), and the second one is the problem of trajectory design to transfer from a LEO to a Molniya orbit. Both problems are solved utilizing two and three-impulse maneuvers.

Without loss of generality, the angles $\omega$ and $\theta$ can be assumed bounded in the $[0, 2\pi]$ domain. In this sense, selection of these angles ($\omega$ and/or $\theta$) as the adjustable parameters is justified within the bounded search region. For the case of $N=3$, the second argument of perigee ($\omega_2$) is considered as the adjustable parameter. It is the only required parameter to determine the optimum solution. Upon selection of the adjustable parameters, a system of nine nonlinear equations will be constructed from Eqs. \eqref{eq:10} and \eqref{eq:11} for $i=1$, $2$, and $3$. This system of nonlinear equations is solved using a relaxed Newton algorithm. 

The optimal solution of the proposed two- and three-impulse SBGM  is compared with the optimized two-impulse Lambert solution for the same cost functions. The Lambert problem can be solved knowing the initial position ($\boldsymbol r_1$), final position ($\boldsymbol r_f$) as well as the total maneuver time ($t_f$). However, sometimes the final orbit is known but the final position is not fixed, i.e., the $\boldsymbol r_f=[r_f\cos\theta_f\quad r_f\sin\theta_f]^T$ is a function of the final polar angle $\theta_f$ as $r_f=a_f(1-e_f^2)/\left[1+e_f\cos⁡(\theta_f+\omega_f)\right]$. Accordingly, the Lambert solution can be optimized using two different optimization scenarios:
\begin{enumerate}[(a)]
\item Parameter $t_f$ is the only optimization variable; 
\item Parameters $t_f$ and $\theta_f$ are the optimization variables.
\end{enumerate}

In the first Lambert scenario (a), the transfer time is taken as the optimization variable to construct an orbital arc that joins the initial and final points. The second Lambert scenario solution (b) is determined using two optimization variables; transfer time and the polar angle at the arrival. Further, the second Lambert solution (b) allows the spacecraft to remain in its initial orbit before applying the first impulse, and this could lead to trajectories with lower cost. The Lambert solution is optimized by a simple search space algorithm. It is also notable that Subsection \ref{sub:4-3} is devoted to discuss results obtained in two case studies. 

\subsection{Case Study 1: Transfer of an Eccentric LEO to a Circular LEO}
\label{sub:4-1}

In this study, it is tried to transfer from a LEO orbit with high eccentricity to a circular LEO orbit at the same orbital plane. The initial and final orbital elements are taken as follows:  

\begin{center}
\begin{tabular}{lllll}
$a_1=13756\text{ km}$ & $e_1=0.5$ & $\omega_1=10^\circ$ & $\theta_{12}=270^\circ$ \\
$a_4=13756\text{ km}$ & $e_4=0$ & $\omega_4=60^\circ$ & $\theta_{34}=30^\circ$ \\
\end{tabular}
\end{center}

There is a solution for each value of $\omega_2$ unless a singularity occurs. The obtained solutions as a function of $\omega_2$ lead to the results depicted in Fig. \ref{fig:2} for the impulse magnitudes, CE and MI penalties, as well as the maneuver time, respectively. It can be easily deduced from these results that two solutions are exist in which one of the impulse values are zero that can be selected as the two-impulse solutions (the two-impulse solutions can be solely determined by solving a system of equations without any adjustable variable). 

\begin{figure}[H]
\centering\includegraphics[width=0.8\linewidth]{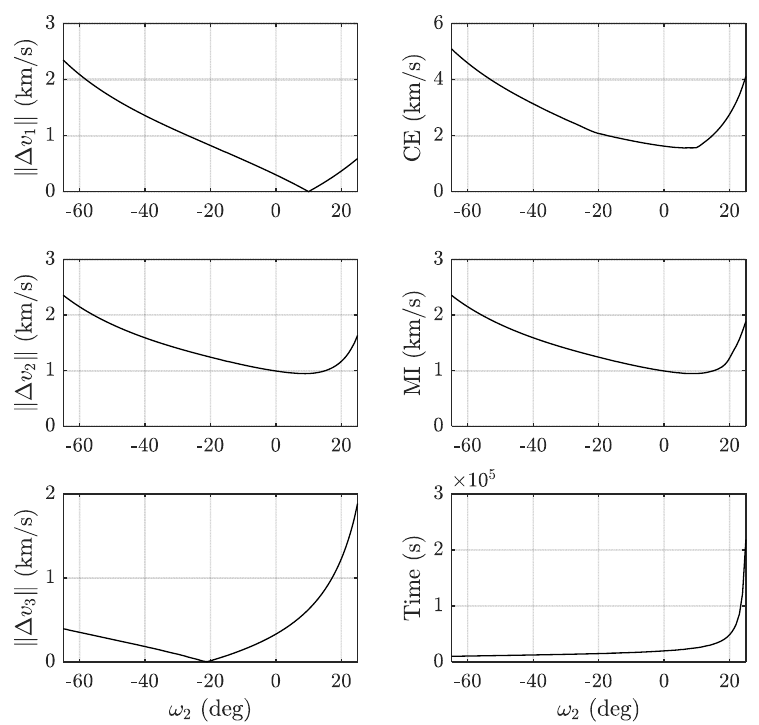}
\caption{Maneuver impulse, cost function, and time against $\omega_2$ for SBGM (case study 1).}
\label{fig:2}
\end{figure}

Optimal solutions for the SBGM is also obtained with the two cost functions of CE and MI, where their results are compared with the pertinent optimal two-impulse Lambert solution of scenarios (a) and (b). The obtained results are summarized in Table \ref{table:2}. In addition, Figs. \ref{fig:3} and \ref{fig:4} show the trajectories associated with the optimal solutions of CE and MI, respectively. The two-impulse SBGM solutions (corresponding to the points that $|\Delta v_1|=0$ and $|\Delta v_3|=0$ in Fig. \ref{fig:2}) are shown in Fig. \ref{fig:5} in which one of them is the optimal solution.

The orbits of case study 1 are intersecting and among the traditional methods, the single impulse maneuver can be applied for this scenario. Moreover, the two-impulse maneuver with the first impulse applied at the perigee (discussed in Remark \ref{rem:1}) can be considered and calculated analytically since the target orbit is circular. The details are presented in Table \ref{table:2}, in which the single impulse maneuver has two distinct solutions and the above-mentioned two-impulse maneuver has a unique solution that are denoted by ``1-impulse'' and ``2-impulse (Remark \ref{rem:1})'', respectively.

\begin{table}[H]
\caption{Summary of the optimal results for case study 1.}
\label{table:2}
\centering
\begin{tabular}{p{2.4 cm} p{0.7 cm} p{3.1 cm} p{2.3 cm} p{1.5 cm} p{0.5cm}} 
 \hline\hline
 Method & Cost & Opt. Sol. (km/s) & Trans. T. (s) & Opt.Var. & Sm. \\ 
 \hline
 \multirow{2}{*}{1-Impulse} &  & \multirow{2}{*}{$J_c=2.6305$} & $t_f=2631$ & \multirow{2}{*}{$-$} & No \\ 
  &  &  & $t_f=3463$ &  & No \\ 
 \hdashline
 2-Impulse & CE & $J_c^*=4.4539$ & $t_f=3750$ & \multirow{2}{*}{$t_f$} & No \\ 
 Lambert (a) & MI & $J_m^*=2.2989$ & $t_f=3184$ &  & No \\ 
 \hdashline
 2-Impulse & CE & $J_c^*=1.4677$ & $t_f=11640$ & \multirow{2}{*}{$t_f$, $\theta_f$} & No \\ 
 Lambert (b) & MI & $J_m^*=0.7831$ & $t_f=12090$ &  & No \\ 
 \hdashline
 2-Impulse &  & $J_c=1.5210$ & \multirow{2}{*}{$t_f=2315$} & \multirow{2}{*}{$-$} & \multirow{2}{*}{Yes} \\ 
 (Remark \ref{rem:1}) & & $J_m=0.9878$ & &  &  \\ 
 \hdashline
 2-Impulse &  & $J_c=1.5746$ & \multirow{2}{*}{$t_f=25415$} & \multirow{2}{*}{$-$} & \multirow{2}{*}{Yes} \\ 
 Shape-Based &  & $J_m=0.9487$ &  &  &  \\ 
 \hdashline
 3-Impulse & CE & $J_c^*=1.5746$ & $t_f=24581$ & \multirow{2}{*}{$\omega_2$} & Yes \\ 
 Shape-Based & MI & $J_m^*=0.9471$ & $t_f=23156$ &  & Yes \\ 
 \hline\hline
 \multicolumn{6}{@{} l}{\scriptsize{Opt.: Optimal, Sol.: Solution, Trans.: Transfer, T.: Time, Var.: Variable(s), Sm.: Smoothness}}
\end{tabular}
\end{table}

\begin{figure}[H]
\centering\includegraphics[width=0.7\linewidth]{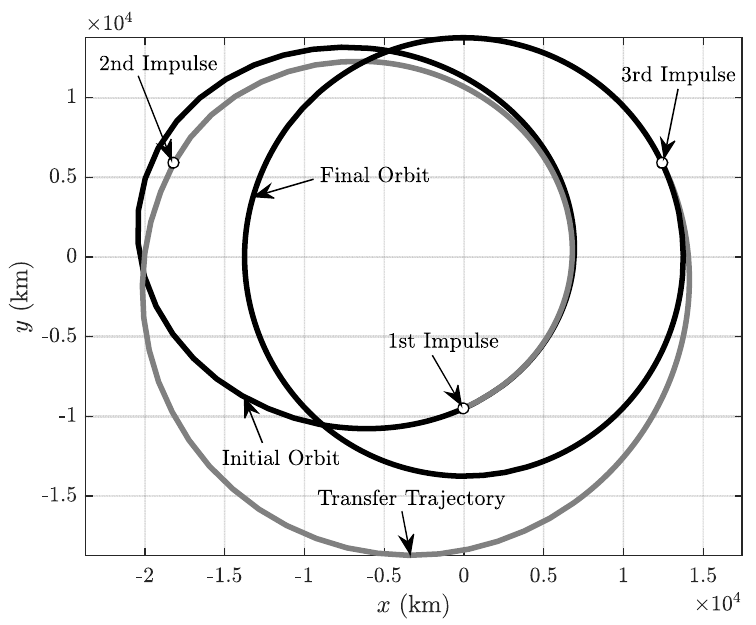}
\caption{Optimal SBGM trajectory with CE cost function for case study 1.}
\label{fig:3}
\end{figure}

\begin{figure}[H]
\centering\includegraphics[width=0.7\linewidth]{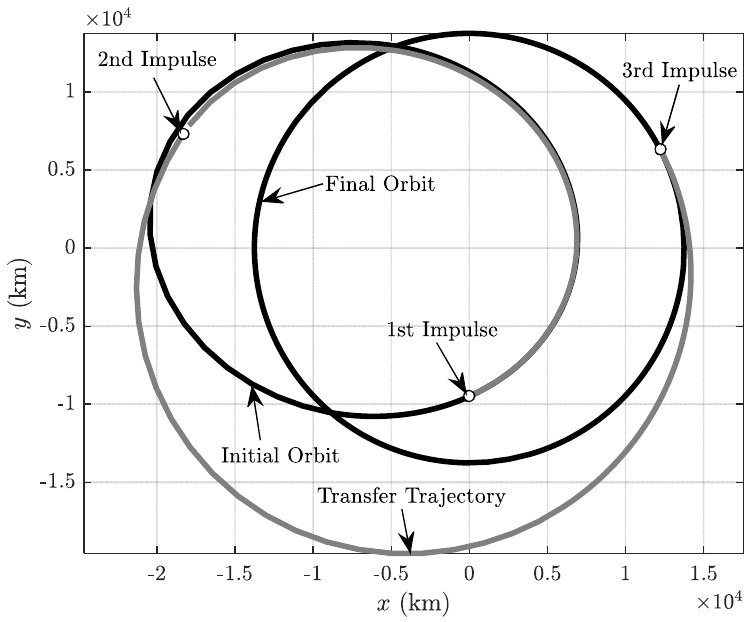}
\caption{Optimal SBGM trajectory with MI cost function for case study 1.}
\label{fig:4}
\end{figure}

\begin{figure}[H]
\centering\includegraphics[width=0.7\linewidth]{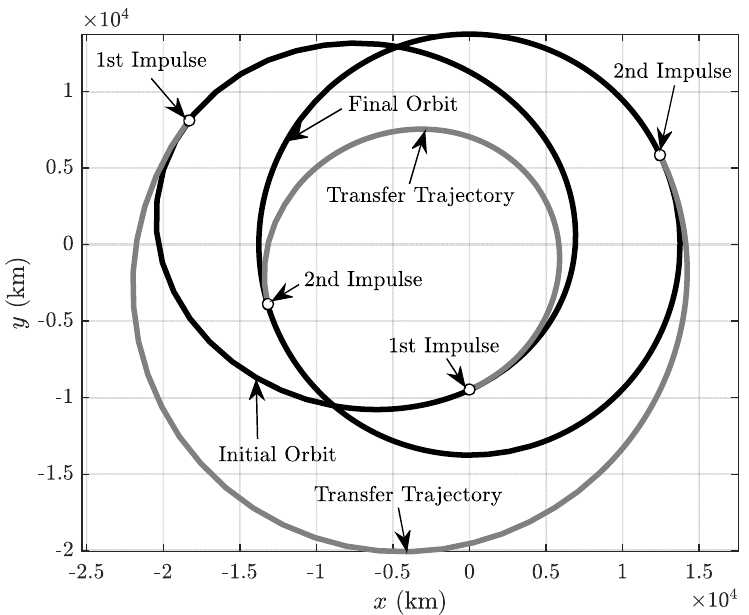}
\caption{Two-impulse SBGM trajectories for case study 1.}
\label{fig:5}
\end{figure}

\subsection{Case Study 2: A LEO to Molniya Transfer}
\label{sub:4-2}

The problem is to transfer a spacecraft from an eccentric LEO to a Molniya orbit. The initial and the final orbital elements are assumed as: 

\begin{center}
\begin{tabular}{lllll}
$a_1=6644.4\text{ km}$ & $e_1=0.01$ & $\omega_1=60^\circ$ & $\theta_{12}=45^\circ$ \\
$a_4=26562\text{ km}$ & $e_4=0.74105$ & $\omega_4=30^\circ$ & $\theta_{34}=15^\circ$ \\
\end{tabular}
\end{center}

The pertinent optimal Lambert solutions as a function of $\omega_2$ lead to the results provided in  Fig. \ref{fig:6} for the impulse values, CE, MI, and the maneuver time, respectively. 

\begin{figure}[H]
\centering\includegraphics[width=0.8\linewidth]{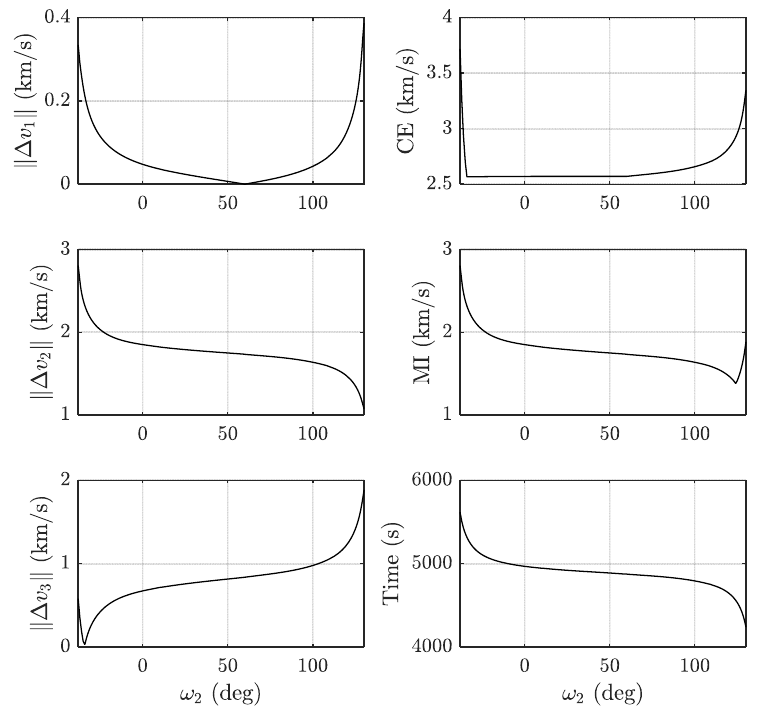}
\caption{Maneuver impulses, cost functions, and time against $\omega_2$ for SBGM (case study 2).}
\label{fig:6}
\end{figure}

The optimal solutions of SBGM are also determined using the previously mentioned cost functions CE and MI, respectively. Again, SBGM results are compared with the optimal two-impulse Lambert solutions scenarios in Table \ref{table:3}. Figs. \ref{fig:7} and \ref{fig:8} also show trajectories associated with the CE and MI cost functions, respectively. 

\begin{table}[H]
\caption{Summarized optimal results for case study 2}
\label{table:3}
\centering
\begin{tabular}{p{2.4 cm} p{0.7 cm} p{3.1 cm} p{2.3 cm} p{1.5 cm} p{0.5cm}} 
 \hline\hline
 Method & Cost & Opt. Sol. (km/s) & Trans. T. (s) & Opt.Var. & Sm. \\ 
 \hline
 2-Impulse & CE & $J_c^*=5.1176$ & $t_f=2894$ & \multirow{2}{*}{$t_f$} & No \\ 
 Lambert (a) & MI & $J_m^*=7.9455$ & $t_f=3570$ &  & No \\ 
 \hdashline
 2-Impulse & CE & $J_c^*=1.3344$ & $t_f=826$ & \multirow{2}{*}{$t_f$, $\theta_f$} & No \\ 
 Lambert (b) & MI & $J_m^*=2.5604$ & $t_f=764$ &  & No \\ 
 \hdashline
 2-Impulse &  & $J_c=2.3263$ & \multirow{2}{*}{$t_f=5009$} & \multirow{2}{*}{$-$} & \multirow{2}{*}{Yes} \\ 
 Shape-Based &  & $J_m=2.5659$ &  &  &  \\ 
 \hdashline
 3-Impulse & CE & $J_c^*=1.3815$ & $t_f=4560$ & \multirow{2}{*}{$\omega_2$} & Yes \\ 
 Shape-Based & MI & $J_m^*=2.5659$ & $t_f=5009$ &  & Yes \\ 
 \hline\hline
 \multicolumn{6}{@{} l}{\scriptsize{Opt.: Optimal, Sol.: Solution, Trans.: Transfer, T.: Time, Var.: Variable(s), Sm.: Smoothness}}
\end{tabular}
\end{table}

\begin{figure}[H]
\centering\includegraphics[width=0.6\linewidth]{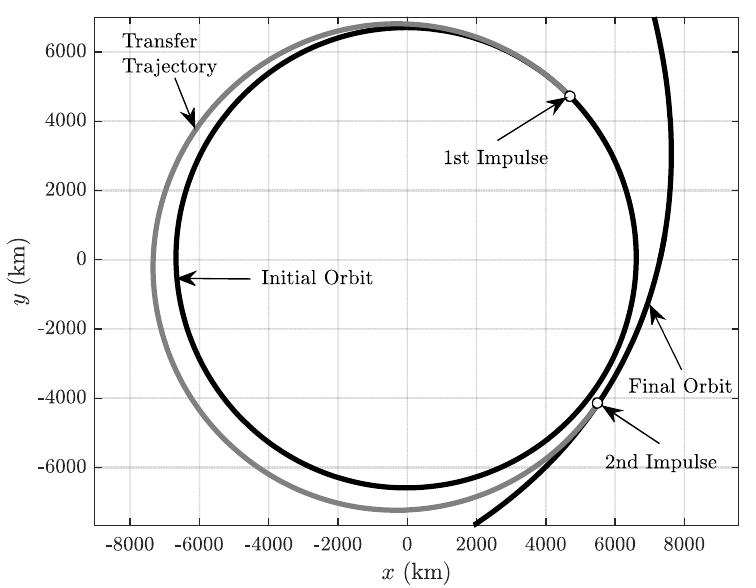}
\caption{Optimal SBGM trajectory with CE cost function for case study 2.}
\label{fig:7}
\end{figure}

\begin{figure}[H]
\centering\includegraphics[width=0.6\linewidth]{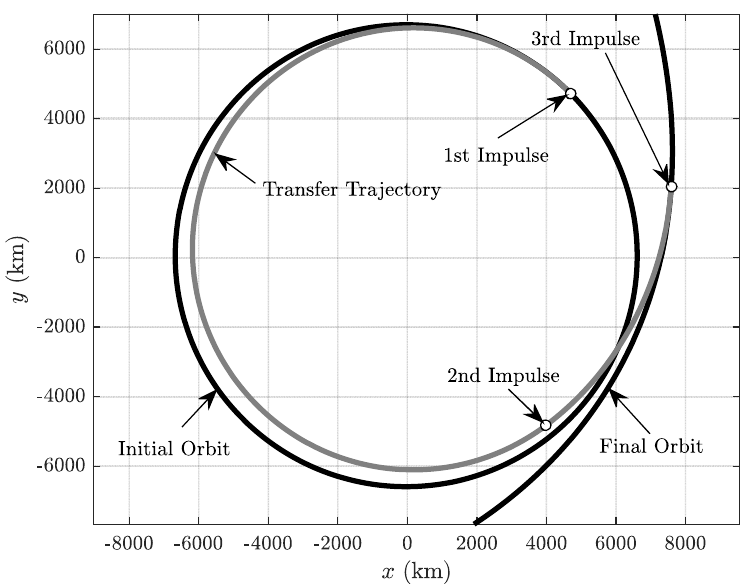}
\caption{Optimal SBGM trajectory with MI cost function for case study 2.}
\label{fig:8}
\end{figure}

\subsection{Discussion}
\label{sub:4-3}

This section is devoted to discuss the obtained results in Subsections \ref{sub:4-1} and \ref{sub:4-2} where the SBGM as well as the optimized Lambert solution under two described scenarios have been utilized to study the coplanar transfers. The SBGM is solved for two and three-impulse maneuvers, where the optimization cost functions are based on two criteria of cumulative control efforts as well as the maximum impulse level. It can be assumed that the computational efficiency is proportional to the number of design variables $n$ ($\text{runtime}\propto n$). The number of function evaluations for solving step 2 in Algorithm \ref{al1} in the numerical results of Subsections \ref{sub:4-1} and \ref{sub:4-2} are $(12+1)\times2000=26\times10^3$. Twelve evaluations for the computation of the Jacobian matrix plus a single evaluation for the function itself multiplied by $2\times10^3$ iterations. In comparison with the Lambert's method, the number of function evaluations of the SBGM is larger owing to the complexity of the associated system of nonlinear equations.

Comparison of the obtained results provided in Tables \ref{table:2} and \ref{table:3} conveys that: 

\begin{enumerate}[1-]
\item The two-impulse Lambert solution (b) leads to the best (lowest) costs, while the two-impulse Lambert solution (a) leads to the worst (highest) costs.
\item The two- and three-impulse SBGM results are close to the two-impulse Lambert (b) solution.  
\item The two-impulse SBGM does not need any optimization variable, in comparison to the Lambert (b) solution requiring at least two optimization variables. 
\item As can be expected, the three-impulse SBGM has better (case study 1) or at least equal (case study 2) costs in comparison with the two-impulse SBGM, since the two-impulse is a special solution of the three-impulse case. 
\item The three-impulse SBGM solution has just a single optimization variable, then it is more computationally efficient than the two-impulse Lambert (b) solution that requires two optimization variables.  
\item The two-impulse solution with the first impulse applied at the perigee (Remark \ref{rem:1}) has lower cost in comparison to the two- and three-impulse SBGM. The reason is that the two- and three-impulse SBGM are constrained at the arrival and the destination points. Moreover, the three-impulse SBGM (in contrast with the two-impulse solutions) has adjustable parameters and can be used in a trade-off by a designer. 
\end{enumerate}

In this sense, the proposed SBGM is advantageous over existing impulsive strategies at least from two important points of view including a suitable reduction in the required design variables for optimization plus rendering solutions with considerably lower impulsive energy. 

\section{Variational Approach}
\label{S:4p}

This section discusses shaping a transfer orbit under the constraint of small variations between two successive arbitrary orbital parameters, i.e., $a_{i+1}=a_i+\delta a$, $e_{i+1}=e_i+\delta e$, $\omega_{i+1}=\omega_i+\delta\omega$ where $\delta a$, $\delta e$, and $\delta\omega$ are small enough such that their second order Taylor expansion terms can be ignored. In this regard, Eqs. \eqref{eq:10} and \eqref{eq:11} can be rewritten as follows (subscript $i$ is removed for simplicity): 

\begin{equation}
\label{eq:25}
e\left[e+\cos(\theta+\omega)\right]\delta\omega+\sin(\theta+\omega)\delta e=0
\end{equation}

\begin{equation}
\label{eq:26}
e\sin(\theta+\omega)\delta\omega-\frac{(1+e^2)\cos(\theta+\omega)+2e}{1-e^2}\delta e+\frac{1+e\cos(\theta+\omega)}{a}\delta a=0
\end{equation}

As mentioned before, Eqs. \eqref{eq:25} and \eqref{eq:26} can be used for impulsive orbital maneuvers with small changes in orbital elements. Substituting difference values of $\delta a$, $\delta e$, and $\delta\omega$ with differentials of $da/d\theta$, $de/d\theta$, and $d\omega/d\theta$ leads to a set of equations that should be considered as trajectory constraints in a tangential continuous-thrust maneuver. 

As a simple intuitive example, for an ellipse-to-circle maneuver in which the only objective is reducing the orbital eccentricity, an arbitrary decreasing pattern for $e$ dependent on $\theta$ can be considered. For instance, assume that $e$ decreases as $e=e_0\exp(-\alpha\theta)$ and $de/d\theta=-\alpha e_0\exp(-\alpha\theta)$ for any $\alpha>0$ where $e_0$ is the eccentricity of the initial orbit. Using Eqs. \eqref{eq:25} and \eqref{eq:26} the values of $da/d\theta$ and $d\omega/d\theta$ can be obtained as a function of $\theta$. Fig. \ref{fig:9} shows the pertinent case study where the eccentricity of an elliptic orbit converged exponentially to zero, regardless of the final values of $a$ and $\omega$ after ten periods of implementing continuous changes in orbital elements. 

\begin{figure}[H]
\centering\includegraphics[width=0.9\linewidth]{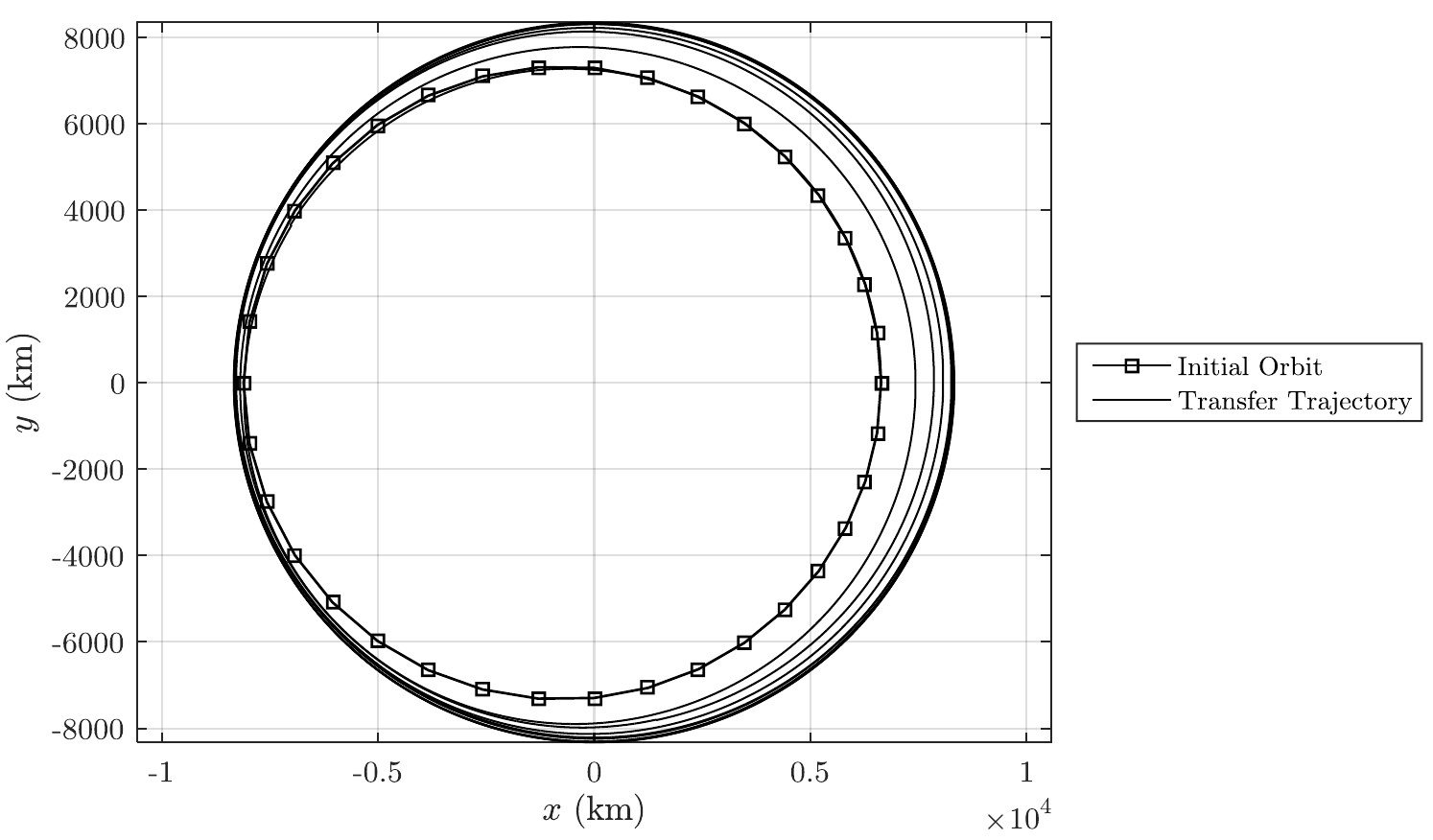}
\caption{A continuous-thrust maneuver: The eccentricity converges exponentially from $0.1$ to zero, $e(\theta)=0.1\exp(-\alpha\theta)$, $\alpha=0.1$. Perigee altitude of the initial orbit is $1000\text{ km}$.}
\label{fig:9}
\end{figure}

\section{Conclusions}
\label{S:5}

A new shape-based multiple-impulse maneuver strategy is proposed for coplanar orbital transfers. It is demonstrated that the well-known two-impulse Hohmann and its counterpart three-impulse bi-elliptic alternative are special cases of the proposed shape-based geometric method (SBGM). In this respect, the proposed shape-based approach is utilized for optimal two and three-impulse transfers between arbitrary fixed (spatial) points in different coplanar oblique elliptical orbits. In addition, optimized two-impulse Lambert solutions are also investigated for comparison purposes with those of the proposed SBGM. The comparative results show that SBGM dramatically reduces the number of required design parameters of the developed optimization problem, while at the same time more efficient transfer trajectories are produced in terms of energy requirements. In other words, the resulting advantages outweigh the extra mathematical effort needed to solve a system of nonlinear equations that emanates out of the proposed scheme. In this sense the proposed SBGM could be a viable alternative for coplanar orbital maneuvers and rendezvous missions with any arbitrary number of impulses. Extension of the proposed method to the small variation cases shows that a continuous tangential maneuver can be achieved by considering a set of trajectory constraints which are the necessary and sufficient conditions to satisfy the smoothness of the transfer trajectory. 


\bibliography{mybibfile}

\end{document}